\documentclass[a4paper,reqno,12pt]{amsart}
\usepackage{verbatim}
\usepackage{amssymb,amsmath,amsthm}
\usepackage{amsfonts}
\usepackage{array}

\textwidth=15.6cm
\oddsidemargin=0.5cm
\evensidemargin=0.5cm
\textheight=22cm

\allowdisplaybreaks

\newtheorem{theorem}{Theorem}[section]
\newtheorem{lemma}{Lemma}[section]

\begin{document}

\title{\small New Multiple Harmonic Sum Identities}


\author{Helmut~Prodinger and Roberto~Tauraso}

\address{\vspace{1cm}
\begin{flushleft} 
Helmut~Prodinger \\
Department of Mathematics \\
University of Stellenbosch \\7602 Stellenbosch, South Africa\\
\vspace{0.5cm}
Roberto~Tauraso \\
Dipartimento di Matematica \\
Universit\`a di Roma ``Tor Vergata'' \\ 00133 Roma, Italy
\vspace{0.5cm}
\end{flushleft}}

\email{hproding@sun.ac.za, tauraso@mat.uniroma2.it}

\subjclass[2010]{05A19, 11A07, 11M32, 11B65.}

\date{}

\keywords{Multiple harmonic sums, partial fraction decomposition, Bernoulli numbers, congruences, zeta values}

\begin{abstract} We consider a special class of binomial sums involving harmonic numbers and we prove three identities by using the elementary method of the partial fraction decomposition. Some applications to infinite series  and congruences are given.
\end{abstract}

\maketitle

\pagestyle{plain}

\section{Introduction} 

The {\sl binomial transform} of a sequence, $\{a_n\}_{n\geq 0}$, is the sequence defined by
$$n\mapsto\sum_{k=0}^n (-1)^{k}{n\choose k} a_k.$$
This kind of transform has been widely studied, see for example the pioneering paper~\cite{FlSe95}:
In it, the following integral representation (N\"orlund-Rice) is used:
\begin{equation*}
\sum_{k=0}^n (-1)^{k}{n\choose k} f(k)=\frac{(-1)^n}{2\pi i}\int_{\mathcal{C}}
\frac{n!}{z(z-1)\dots(z-n)}f(z)\,dz,
\end{equation*}
where $f(z)$ is an analytic extension of the sequence $f(k)$, and the curve $\mathcal{C}$ 
includes the poles $0,1,\dots,n$ and no others. Enlarging the contour of integration
leads to an asymptotic expansion, but often also to identities, in particular when $f(z)$
is a rational function. In this case the method is equivalent to considering the partial fraction expansion of $\frac{n!}{z(z-1)\dots(z-n)}f(z)$. This is also the point of view that we adopt in the sequel. Our terms $f(k)$ are not exactly rational functions in $k$, but not too far away.
We consider  for example {\it harmonic numbers} $H_n(s)$ with $s\in {\mathbb N}^+$, which can be written as
\begin{equation*}
H_n(s)=\sum_{j=1}^n\frac{1}{j^s}=\sum_{j=1}^\infty\Big(\frac{1}{j^s}-\frac{1}{(n+j)^s}\Big),
\end{equation*}
and for fixed $j$, this \emph{is} a rational function, leading to an identity, 
and these identities will be summed over all $j$.

In this paper we are interested in a variation of the binomial transform, that is 
$$n\mapsto\sum_{k=1}^n (-1)^{k}{n\choose k}{n+k\choose k}^{\pm 1} a_k,$$
in the special case where the generic term $a_n$ involves harmonic numbers.
As we will see in a moment, these type of sums
can be evaluated in terms of {\it multiple harmonic sum} which are defined by
\begin{equation*}
H_n(s_1, s_2, \ldots, s_l)=\sum_{1\le k_1<k_2<\cdots<k_l\le n}\prod_{i=1}^l
\frac{({\rm sgn}(s_i))^{k_i}}{k_i^{|s_i|}}.
\end{equation*}
for $l\in {\mathbb N}^+$,  ${\bf s}=(s_1, s_2, \ldots, s_l)\in 
({\mathbb Z}^{*})^l$.
The integers  $l({\bf s}):=l$ and $|{\bf s}|:=\sum_{i=1}^l|s_i|$ 
are called the length (or depth) and the weight
of a multiple harmonic sum.
By $\{s_1, s_2, \ldots, s_j\}^m$ we denote the set formed
by repeating $m$ times $(s_1, s_2, \ldots, s_j)$. 

\noindent The following three identities appear in 
\cite[Th. 1 and Th 2]{Pr:10}: 
for any positive integers $n,r$,
\begin{align} 
&\sum_{k=1}^n (-1)^{k}{n\choose k}{n+k\choose k}H_k(1)=2(-1)^n H_n(1),
\label{i5}\\
&\sum_{k=1}^n (-1)^{k}{n\choose k}{n+k\choose k}H_k(d)=
(-1)^{n-1}\sum_{{\bf s}\preceq (\{1\}^{d-2},2)} 2^{l({\bf s})}
H_n(s_1,\dots,s_{l({\bf s})-1},-s_{l({\bf s})}),
\label{i6}\\
&\sum_{k=1}^n {(-1)^{k}\over k^r}{n\choose k}{n+k\choose k}=
-\sum_{{\bf s}\preceq (\{1\}^r)} 2^{l({\bf s})}H_n({\bf s}),
\label{i7}
\end{align}
where ${\bf s}\preceq {\bf t}=(t_1, t_2, \ldots, t_m)\in ({\mathbb N}^{+})^m$ means  that the sums are taken over  
all the {\sl compositions} of {\bf t}, i. e. any ${\bf s}=(s_1, s_2, \ldots, s_l)$ such that 
$s_i=\sum_{j_{i-1}<k\leq j_{i}} t_k$ for some
$0=j_0<j_1< j_2<\cdots <j_l=m$. So, for example ${\bf s}\preceq (1,1,1,2)$ if and only if
$${\bf s}\in\{(1,1,1,2),(2,1,2),(1,2,2),(1,1,3),(2,3),(1,4),(3,2),(5)\}.$$

\noindent Moreover, let
$$S_{m,n}(s_1,\dots,s_l)=
\sum_{m\leq k_1\leq\cdots\leq k_l\leq n}
\frac{1}{k_1^{s_1}\cdots k_r^{s_l}},$$
for ${\bf s}=(s_1, s_2, \ldots, s_l)\in ({\mathbb N}^{*})^l$,
with the convention that $S_n({\bf s}))=S_{1,n}({\bf s}))$.
The following four identities appear in 
\cite[Cor. 2.1 and Th. 2.3]{HHT:13}: for any positive integers $n,a,b$,
\begin{align}
&2\sum_{k=1}^n
{(-1)^{k}\over k^{2b}} \binom{n}{k}\binom{n+k}{k}^{-1}=-S_n(\{2^b\}),
\label{i1}\\
&2\sum_{k=1}^n
{1\over k^{2b-1}} \binom{n}{k}\binom{n+k}{k}^{-1}=S_n(1,\{2^{b-1}\}),
\label{i2}\\
&2\sum_{k=1}^n
\frac{(-1)^{k}}{k^{2b}} \binom{n}{k}\binom{n+k}{k}^{-1}(H_k(2a+1)+H_{k-1}(2a+1))=-S_n(\{2^a\},3,\{2^{b-1}\}),
\label{i3}\\
&2\sum_{k=1}^n
\frac{1}{k^{2b-1}} \binom{n}{k}\binom{n+k}{k}^{-1}(H_k(-2a)+H_{k-1}(-2a))
=-S_n(\{2^a\},1,\{2^{b-1}\}).
\label{i4}
\end{align}
Note that also in this case the right-hand side is a combination of multiple harmonic sums because
$$S_n({\bf t})=\sum_{{\bf s}\preceq{\bf t}} H_n({\bf s}).$$
In the next two sections we provide three new identities which extend the pattern. 
The proofs are based on the elementary method of the partial fraction decomposition. 
In the final section we give some applications to infinite series and to congruences. In particular we show the following 
{\sl exotic sums}:
\begin{align*}
R_{\infty}(1,2)&=\sum_{1\leq j_1<k_1\leq k_2}
\frac{1}{j_1^2(2k_1-1)k_2^2}=
\frac{31}{8}\zeta(5)-\frac{7}{4}\zeta(3)\zeta(2),\\
R_{\infty}(2,2)&=\sum_{1\leq j_1\leq j_2<k_1\leq k_2}
\frac{1}{j_1^2 j_2^2(2k_1-1)k_2^2}=
-\frac{635}{64}\zeta(7)+\frac{49}{16}\zeta(3)\zeta(4)+\frac{31}{8}\zeta(5)\zeta(2).
\end{align*}
where
$$R_n(a,b)=\sum_{k=1}^n
\frac{S_{1,k-1}(\{2\}^{a})S_{k,n}(\{2\}^{b-1})}{(2k-1)}=
\sum_{1\leq j_1\leq\cdots\leq j_a
<k_1\leq\cdots\leq k_b\leq n}
\frac{1}{j_1^2\cdots j_a^2(2k_1-1)k_2^2
\cdots k_b^2}$$
for $a,b\in {\mathbb N}^+$.

\section{Results: first part}

\begin{theorem} For $n,r\geq 1$ and $d\geq 2$
\begin{align}
&\sum_{k=1}^n {(-1)^{k}\over k^r}{n\choose k}{n+k\choose k}H_k(1)=
\sum_{{\bf s}\preceq (2,\{1\}^{r-1})} 2^{l({\bf s})}
H_n(-s_1,s_2,\dots,s_{l({\bf s})}),\label{c1}\\
&\sum_{k=1}^n {(-1)^{k}\over k^r}{n\choose k}{n+k\choose k}H_k(d)=
-\sum_{{\bf s}\preceq (\{1\}^{d-2},3,\{1\}^{r-1})} 
2^{l({\bf s})}H_n({\bf s}).
\label{c2}
\end{align}
\end{theorem}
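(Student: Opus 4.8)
The engine of both proofs is the single partial-fraction identity
\[
\sum_{k=0}^n (-1)^{k}\binom{n}{k}\binom{n+k}{k}\frac{1}{x+k}=\frac{P(x)}{x},
\qquad P(x):=\prod_{j=1}^n\frac{j-x}{j+x},
\]
which I would verify by matching residues: the residue of the right-hand side at $x=-k$ is exactly $(-1)^k\binom nk\binom{n+k}k$, both sides are $O(1/x)$ at infinity, and both are rational, so they coincide. Differentiating $p-1$ times in $x$ gives the companion evaluation of $\sum_k(-1)^k\binom nk\binom{n+k}k(x+k)^{-p}$ in terms of the derivatives of $P(x)/x$. The decisive structural fact is that the reciprocal factorises as $P(x)^{-1}=\prod_{j=1}^n\frac{j+x}{j-x}=\prod_{j=1}^n(1+2\sum_{t\ge1}x^t/j^t)$, from which $[x^m]P(x)^{-1}=\sum_{\mathbf s}2^{l(\mathbf s)}H_n(\mathbf s)$ with the sum over all compositions $\mathbf s$ of $m$ into positive parts. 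This is the source of the weights $2^{l(\mathbf s)}$ and of the multiple harmonic sums in \eqref{i5}--\eqref{i7}, and it is what I would match against at the end.

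Next I would isolate the two non-rational ingredients. The factor $k^{-r}$ I encode by coefficient extraction: since $(x+k)^{-1}=\sum_{m\ge0}(-x)^m k^{-m-1}$, the generating function $\mathcal G(x):=\sum_{k=1}^n(-1)^k\binom nk\binom{n+k}k H_k(s)(x+k)^{-1}$ has $[x^{r-1}]\mathcal G(x)$ equal to $(-1)^{r-1}$ times the left-hand side of \eqref{c1} or \eqref{c2}. The factor $H_k(s)$ I expand by the telescoping representation $H_k(s)=\sum_{m\ge1}(m^{-s}-(k+m)^{-s})$. Substituting and interchanging the order of summation, the inner sum over $k$ becomes a rational function of $x$ that I evaluate by the master identity above and its derivatives; the routine partial-fraction split of $(x+k)^{-1}(m+k)^{-s}$ in the variable $k$ reduces $\mathcal G(x)$ to an $m$-series whose terms are built from $P(x)/x$, the numbers $P(m)$, and the derivatives of $P(x)/x$ at $x=m$, divided by powers of $(x-m)$. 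For $s=1$ this collapses to the clean form $\mathcal G(x)=\sum_{m\ge1}\frac{P(x)-P(m)}{m(x-m)}$.

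It then remains to extract $[x^{r-1}]$ and evaluate the resulting $m$-sum in closed form. Here I would partial-fraction the surviving expressions in the variable $m$, using that $P(m)$ has simple poles at $m=-j$ for $1\le j\le n$ whose residues carry a factor $(-1)^j$: summing the contribution of each such pole over $m\ge1$ produces the \emph{alternating} single sums $\sum_{k}(-1)^k/k^{a}$, which is precisely where the negative leading index $-s_1$ of \eqref{c1} comes from, while the pole at $m=0$ and the product expansions of $P(x)^{\pm1}$ supply the remaining positive entries, the block $\{1\}^{r-1}$ coming from the $k^{-r}$ extraction, and the weights $2^{l(\mathbf s)}$. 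For $d\ge2$ the analogous manipulation keeps all entries positive, the harmonic index $d$ being redistributed as the block $(\{1\}^{d-2},3)$ after one unit of weight is transferred across the cross term $(x-m)^{-1}$.

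The step I expect to be the main obstacle is the case $s=1$ of \eqref{c1}. There the two halves of the telescoping series are only conditionally convergent, so they must be kept combined throughout; controlling the cancellation of the divergent $\zeta$-type tails is delicate, and it is exactly this regularisation that flips the sign of the leading index. In both identities the genuinely laborious part is the combinatorial bookkeeping needed to show that, after all rearrangements, the surviving terms are indexed precisely by the compositions of $(2,\{1\}^{r-1})$, respectively $(\{1\}^{d-2},3,\{1\}^{r-1})$, with the correct overall sign and the correct weights $2^{l(\mathbf s)}$. I would anchor this by first checking the extreme cases by hand---$r=1$, where \eqref{c1} reduces to $\sum_{k=1}^n(-1)^k\binom nk\binom{n+k}kH_k(1)/k=2H_n(-2)$, and $d=2,\ r=1$, where \eqref{c2} reads $-2H_n(3)$---and then either matching Taylor coefficients in general or setting up an induction on $r$ exploiting the recurrence that multiplication by $1/k$ induces on $\mathcal G(x)$.
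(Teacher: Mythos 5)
Your setup is sound and in fact coincides with the paper's: the master identity $\sum_{k}(-1)^k\binom nk\binom{n+k}k(x+k)^{-1}=P(x)/x$ with $P(x)=\prod_{j=1}^n\frac{j-x}{j+x}$ is exactly the partial fraction decomposition the authors write for $\frac{(z+n)!(z-n-1)!}{z!(z-1)!}\cdot z^{-1}$ (up to the reflection $x=-z$), the telescoping representation $H_k(d)=\sum_{j\ge1}(j^{-d}-(k+j)^{-d})$ is the same, and your intermediate formula $\mathcal G(x)=\sum_{m\ge1}\frac{P(x)-P(m)}{m(x-m)}$ for $d=1$ is correct --- it is precisely the authors' $\sum_{j\ge1}(C_1(n,j)+D_1(n,j))$ before they do anything clever with it. The gap is that everything after this point, which is the actual content of the proof, is deferred to ``combinatorial bookkeeping'' without a workable mechanism. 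Your proposed finish --- partial-fractioning $P(m)$ in the variable $m$ and summing each pole's contribution over $m\ge1$ --- produces $\zeta$-values together with sums of the form $\sum_{j=1}^n(-1)^j j\binom{n+j}{j}\binom nj\cdot(\cdots)$ coming from the residues $\mathrm{Res}_{m=-j}P(m)=-j(-1)^j\binom{n+j}{j}\binom nj$; these are again binomial sums, not multiple harmonic sums, and the cancellation of all the $\zeta$'s and the emergence of the composition-indexed answer from this route is exactly the hard part, not a routine one. In particular your heuristic that the leading index $-s_1$ in \eqref{c1} comes from the signs of these residues does not survive contact with the actual residue values.

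What the paper does instead, and what is missing from your proposal, is a creative-telescoping evaluation of the $j$-summed generating function: they exhibit an explicit pair $F(n,j)$, $G(n,j)$ with $(n+1-z)F(n+1,j)+(n+1+z)F(n,j)=G(n,j+1)-G(n,j)$, sum over $j$ to get a first-order recurrence in $n$ for $S(n)$, solve it by the integrating factor $T(n)=(-1)^n\frac{(n-z)!}{(n+z)!}S(n)$, and land on the closed form
\begin{equation*}
(-1)^{n-1}S(n)=2\sum_{k=1}^n\frac{z}{1-z/k}\prod_{j=k+1}^{n}\frac{1+z/j}{1-z/j}\cdot\Bigl(\frac{(-1)^k}{k^2}-\frac{1}{k^3}\,\frac{w}{1-w/k}\prod_{j=1}^{k-1}\frac{1+w/j}{1-w/j}\Bigr),
\end{equation*}
whose half-products $\prod_{j=k+1}^n$ and $\prod_{j=1}^{k-1}$ are what make the coefficient extraction $[z^r][w^{d-1}]$ yield the compositions of $(2,\{1\}^{r-1})$ and $(\{1\}^{d-2},3,\{1\}^{r-1})$, with the factor $(-1)^k/k^2$ explaining the negative leading index in \eqref{c1}. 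Your full-product expansion $[x^m]P(x)^{-1}=\sum 2^{l(\mathbf s)}H_n(\mathbf s)$ is the right prototype but only covers the case without the $H_k(d)$ factor (identity \eqref{i7}); to prove \eqref{c1} and \eqref{c2} you need the split-product structure above, and your proposal contains no step that produces it. Until you supply either this telescoping argument or a genuinely different evaluation of $\sum_{m\ge1}\frac{P(x)-P(m)}{m(x-m)}$ and its $d\ge2$ analogue, the proof is incomplete. (Minor point: your worry about conditional convergence for $d=1$ is unfounded once the two halves are combined, since $P(m)-(-1)^n=O(1/m)$ makes the combined series absolutely convergent; your spot checks $r=1$ and $(d,r)=(2,1)$ are correct.)
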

\begin{proof} For $n,j\geq 1$, let (factorials are defined via Gamma functions)
\begin{equation*}
f(z):=\frac{(z+n)!(z-n-1)!}{z!(z-1)!}\cdot
\frac{1}{z^{r+1}}\cdot\bigg[\frac1{j^d}-\frac1{(z+j)^d}\bigg],
\end{equation*}
and consider its partial fraction decomposition
\begin{align*}
f(z)&=\sum_{k=1}^n(-1)^{n-k}\binom{n}{k}\binom{n+k}{k}
\frac{1}{k^r}\bigg[\frac1{j^d}-\frac1{(k+j)^d}\bigg]\frac{1}{z-k}\\
&\qquad +\frac{C_r(n,j)}{z^r}+\cdots+\frac{C_1(n,j)}{z}
+\frac{D_d(n,j)}{(z+j)^d}+\cdots+\frac{D_1(n,j)}{z+j}.
\end{align*}
Multiply by $z$ and let $z\to\infty$. 
\begin{align*}
0&=\sum_{k=1}^n(-1)^{n-k}\binom{n}{k}\binom{n+k}{k}\frac{1}{k^r}\bigg[\frac1{j^d}-\frac1{(k+j)^d}\bigg]+C_1(n,j)+D_1(n,j)
\end{align*}
where
\begin{align*}
C_1(n,j)&=[z^{-1}]\frac{(z+n)!(z-n-1)!}{z!(z-1)!}
\frac{1}{z^{r+1}}\bigg[\frac1{j^d}-\frac1{(z+j)^d}\bigg]\\
&=[z^{r}]\frac{(z+n)!(z-n-1)!}{z!(z-1)!}
\bigg[\frac1{j^d}-\frac1{(z+j)^d}\bigg]\\
&=-[z^{r}][w^{d-1}]\frac{(z+n)!(z-n-1)!}{z!(z-1)!}
\bigg[\frac{1}{w-j}-\frac{1}{w-z-j}\bigg],
\end{align*}
\begin{align*}
D_1(n,j)&=[(z+j)^{-1}]\frac{(z+n)!(z-n-1)!}{z!(z-1)!}\frac{1}{z^{r+1}}\bigg[\frac1{j^d}-\frac1{(z+j)^d}\bigg]\\
&=-[(z+j)^{d-1}]\frac{(z+n)!(z-n-1)!}{z!(z-1)!}\frac{1}{z^{r+1}}\\
&=-[w^{d-1}]\frac{(w-j+n)!(w-j-n-1)!}{(w-j)!(w-j-1)!}
\frac{1}{(w-j)^{r+1}}\\
&=[z^{r}][w^{d-1}]\frac{(w-j+n)!(w-j-n-1)!}{(w-j)!(w-j-1)!}
\frac{1}{z-(w-j)}\\
&=[z^r][w^{d-1}]\frac{(w-j+n)!(w-j-n-1)!}{(w-j)!(w-j-1)!}
\bigg[\frac{1}{w-j}-\frac{1}{w-z-j}\bigg].
\end{align*}
Define $S(n)$ via
$$[z^{r}][w^{d-1}]S(n)=\sum_{j\geq 1}(C_1(n,j)+D_1(n,j))$$
then by summing over $j\geq 1$, we obtain 
\begin{align*}
[z^{r}][w^{d-1}](-1)^{n-1}S(n)&=
\sum_{k=1}^n\frac{(-1)^{k}}{k^r}\binom{n}{k}\binom{n+k}{k}
H_k(d).
\end{align*}
Now set
\begin{align*}
F(n,j)&=\left(-\frac{(z+n)!(z-n-1)!}{z!(z-1)!}
+\frac{(w-j+n)!(w-j-n-1)!}{(w-j)!(w-j-1)!}
\right)
\bigg[\frac{1}{w-j}-\frac{1}{w-z-j}\bigg],
\end{align*}
and
$$G(n,j)=-\frac{(w-j+n+1)!(w-j-n-1)!}{(w-j)!(w-j)!}\frac{2z}{n+1}.$$
Then, as is easy to check,
\begin{equation*}
(n+1-z)F(n+1,j)+(n+1+z)F(n,j)=G(n,j+1)-G(n,j).
\end{equation*}
Summing over $j\geq 1$, we get
$$(n+1-z)S(n+1)+(n+1+z)S(n)=\lim_{j\to\infty}G(n,j+1)-G(n,1)
=-\frac{2z}{n+1}-G(n,1).$$
Let 
$$T(n)=(-1)^n\frac{(n-z)!}{(n+z)!}S(n)$$
then, since 
$$G(n,1)=-\frac{(w+n)!(w-n-2)!}{(w-1)!(w-1)!}\frac{2z}{n+1},$$
we have
\begin{align*}
T(n+1)-T(n)&=(-1)^n\frac{(n-z)!}{(n+z)!}\frac{1}{(n+1+z)}
\left(\frac{2z}{n+1}+G(n,1)\right)\\
&=\frac{(-1)^n(n-z)!}{(n+1+z)!}
\left(1-\frac{(w+n)!(w-n-2)!}{(w-1)!(w-1)!}\right)
\frac{2z}{n+1}.
\end{align*}
Therefore
\begin{align*}
T(n)&=\sum_{k=1}^n (T(k)-T(k-1))\\
&=\sum_{k=1}^n\frac{(-1)^{k-1}(k-1-z)!}{(k+z)!}
\left(1-\frac{(w+k-1)!(w-k-1)!}{(w-1)!(w-1)!}\right)\frac{2z}{k},
\end{align*}
and
\begin{align*}
(-1)^{n-1}S(n)&=\frac{(n+z)!}{(n-z)!}
\sum_{k=1}^n\frac{(-1)^{k}(k-1-z)!}{(k+z)!}
\left(1-\frac{(w+k-1)!(w-k-1)!}{(w-1)!(w-1)!}\right)\frac{2z}{k}
\\
&=2\sum_{k=1}^n
\frac{z}{(1-z/k)}
\prod_{j=k+1}^{n}\frac{1+z/j}{1-z/j}\cdot
\left(\frac{(-1)^k}{k^2}-\frac{1}{k^3}\frac{w}{(1-w/k)}
\prod_{j=1}^{k-1}\frac{1+w/j}{1-w/j}\right).
\end{align*}
If $d=1$ then
$$\sum_{k=1}^n\frac{(-1)^{k}}{k^r}\binom{n}{k}\binom{n+k}{k}
H_k(1)$$
is given by
\begin{align*}
[z^{r}][w^{0}](-1)^{n-1}S(n)
&=2[z^{r}]\sum_{k=1}^n
\frac{z}{(1-z/k)}
\prod_{j=k+1}^{n}\frac{1+z/j}{1-z/j}\cdot
\left(\frac{(-1)^k}{k^2}\right)\\
&=2[z^{r-1}]\sum_{k=1}^n
\frac{(-1)^k}{k^2}
\bigg(1+\sum_{s=1}^{\infty}
\frac{z^s}{k^s}\bigg)
\prod_{j=k+1}^{n}\bigg(1+2\sum_{s=1}^{\infty}
\frac{z^s}{j^s}\bigg)\\
&=\sum_{{\bf s}\preceq (2,\{1\}^{r-1})} 2^{l({\bf s})}
H_n(-s_1,s_2,\dots,s_{l({\bf s})}).
\end{align*}
If $d\geq 2$ then
$$\sum_{k=1}^n\frac{(-1)^{k}}{k^r}\binom{n}{k}\binom{n+k}{k}
H_k(d)$$
is given by
\begin{align*}
[z^{r}][w^{d-1}](-1)^{n-1}S(n)
&=-2[z^{r}][w^{d-1}]\sum_{k=1}^n
\frac{z}{(1-z/k)}
\prod_{j=k+1}^{n}\frac{1+z/j}{1-z/j}\\
&\qquad\qquad\cdot
\left(\frac{1}{k^3}\frac{w}{(1-w/k)}
\prod_{j=1}^{k-1}\frac{1+w/j}{1-w/j}\right)\\
&=-2[z^{r-1}][w^{d-2}]\sum_{k=1}^n
\bigg(1+\sum_{s=1}^{\infty}
\frac{w^s}{k^s}\bigg)
\prod_{j=1}^{k-1}\bigg(1+2\sum_{s=1}^{\infty}
\frac{w^s}{j^s}\bigg)\\
&\qquad\qquad\cdot\left(\frac{1}{k^3}\right)\cdot
\bigg(1+\sum_{s=1}^{\infty}
\frac{z^s}{k^s}\bigg)
\prod_{j=k+1}^{n}\bigg(1+2\sum_{s=1}^{\infty}
\frac{z^s}{j^s}\bigg)\\
&=-\sum_{{\bf s}\preceq (\{1\}^{d-2},3,\{1\}^{r-1})} 
2^{l({\bf s})}H_n({\bf s}).
\end{align*}
\end{proof}

\section{Results: second  part}

\begin{theorem} For $n,a,b\geq 1$ 
\begin{align}\label{c3}
2\sum_{k=1}^n
\frac{1}{k^{2b-1}}\binom{n}{k}\binom{n+k}{k}^{-1}\Big((-1)^k H_{k-1}(2a)-H_{k-1}(-2a)\Big)
&=R_n(a,b).
\end{align}
\end{theorem}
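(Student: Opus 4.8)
The plan is to mirror the partial-fraction machinery developed for the first theorem, but now adapted to the reciprocal central binomial coefficient $\binom{n+k}{k}^{-1}$, which shifts the relevant Gamma-function ratio. Specifically, I would set
\begin{equation*}
f(z):=\frac{z!(z-1)!}{(z+n)!(z-n-1)!}\cdot\frac{1}{z^{2b}}\cdot\Big[\frac{1}{j^{2a}}-\frac{1}{(z+j)^{2a}}\Big],
\end{equation*}
so that the residues at $z=k$ for $1\le k\le n$ reproduce the factors $\binom{n}{k}\binom{n+k}{k}^{-1}k^{-(2b-1)}$ together with the bracket that, after summation over $j\ge 1$, assembles the harmonic-number combination $H_{k-1}(2a)$ (and its alternating companion). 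As before, multiplying by $z$ and letting $z\to\infty$ forces the sum of all residues to vanish, which converts the target binomial sum into the coefficients $C_1(n,j)+D_1(n,j)$ extracted via $[z^{2b-1}][w^{2a-1}]$ from a suitable generating object $S(n)$.

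Next I would seek a first-order recurrence in $n$ for $S(n)$ of the same shape as before, namely
\begin{equation*}
(n+1-z)S(n+1)+(n+1+z)S(n)=G(n,j+1)-G(n,1)\Big|_{\text{summed}},
\end{equation*}
with $G$ an explicit telescoping quantity built from the shifted Gamma ratios; the key is that the reciprocal binomial flips the roles of $(z+n)!/(z-n-1)!$ versus $z!/(z-1)!$, so the surviving boundary term at $j\to\infty$ and $j=1$ can again be computed in closed form. Introducing the integrating factor $T(n)=(-1)^n\frac{(n+z)!}{(n-z)!}S(n)$ (with the reciprocal ratio relative to the first theorem) should turn the recurrence into a telescoping difference, giving $T(n)=\sum_{k=1}^n(T(k)-T(k-1))$ and hence a single-sum expression for $S(n)$ as a product of factors $\frac{1-z/j}{1+z/j}$ and $\frac{1-w/j}{1+w/j}$ over the appropriate ranges.

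The final step is the coefficient extraction. I would expand the resulting product, reading off $[z^{2b-1}]$ from the tail $\prod_{j=k+1}^n$ and the local factor at $k$, and $[w^{2a-1}]$ from the head $\prod_{j=1}^{k-1}$ and its local factor, so that the even/odd weight pattern produces exactly the iterated sum with a single denominator $(2k-1)$ at the junction. The emergence of $(2k-1)$ rather than $k$ at the central index is the arithmetic heart of the identity: it arises because the two products meeting at $k$ contribute $\frac{1}{1-z/k}\cdot\frac{1}{1-w/k}$ type factors whose relevant cross term, combined with the $(-1)^k H_{k-1}(2a)-H_{k-1}(-2a)$ weighting, collapses the even-power contributions at index $k$ into $\frac{1}{2k-1}$ via the standard $\frac{1}{2}\big(\frac{1}{k-1/2}\big)$ mechanism.

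I expect the main obstacle to be verifying the telescoping identity for $F$ and $G$ in this reciprocal setting and, more delicately, checking that the coefficient extraction genuinely yields the constrained multi-index sum $R_n(a,b)=\sum_{j_1\le\cdots\le j_a<k_1\le\cdots\le k_b}\frac{1}{j_1^2\cdots j_a^2(2k_1-1)k_2^2\cdots k_b^2}$ with the strict inequality $j_a<k_1$ and the single shifted factor $(2k-1)$ appearing precisely once. Matching the alternating combination $(-1)^k H_{k-1}(2a)-H_{k-1}(-2a)$ on the left to the purely positive-term sum $R_n(a,b)$ on the right is where the $(-1)^k$ signs must cancel or combine to halve the index, and confirming that this produces $2k-1$ (and not, say, $2k$ or $k$) will require the most careful bookkeeping.
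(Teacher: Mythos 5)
Your overall strategy---partial fractions of a Gamma-ratio kernel, a first-order recurrence in $n$, an integrating factor, then coefficient extraction in $z$ and $w$---is indeed the paper's strategy. But your choice of kernel is wrong, and this is not a cosmetic issue. You propose $f(z)=\frac{z!\,(z-1)!}{(z+n)!\,(z-n-1)!}\cdot\frac{1}{z^{2b}}[\cdots]$, whose Gamma-ratio equals $\frac{(z-1)(z-2)\cdots(z-n)}{(z+1)(z+2)\cdots(z+n)}$. This function has \emph{zeros} at $z=1,\dots,n$ and simple poles only at $z=-1,\dots,-n$; its residues there are proportional to $\binom{n}{k}\binom{n+k}{k}$ (the direct product, the kernel of the first theorem), not to the reciprocal. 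The correct kernel is $\frac{z!\,(z-n-1)!}{(z+n)!\,(z-1)!}=\frac{1}{(z+1)\cdots(z+n)\,(z-1)\cdots(z-n)}$, which has simple poles at \emph{all} of $\pm 1,\dots,\pm n$ and, after normalizing by $(n!)^2$, gives residues $(-1)^{n-k}k\binom{n}{k}\binom{n+k}{k}^{-1}$ at $z=k$. With your $f$ the sum of residues over $z=1,\dots,n$ that is supposed to become the left-hand side simply is not there.

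The second, related gap is that you never account for the poles at $z=-k$, $1\le k\le n$, and these are the arithmetic heart of this particular identity. They interact with the factor $\frac{1}{(z+j)^{2a}}$ (including the coincidence $j=k\le n$, which must be excluded and treated separately), and after summing over $j$ they are precisely what converts the naive $H_k(d)$ into the combination $H_k(d)-(-1)^{r+d}H_{k-1}(d)$ on the left-hand side. The stated identity is then \emph{not} obtained from a single specialization: one takes the master formula at $(r,d)=(2b-1,2a)$ to get $(-1)^k\big(H_k(2a)+H_{k-1}(2a)\big)$, subtracts a second (non-alternating) evaluation producing $H_k(-2a)+H_{k-1}(-2a)$, and uses $(-1)^k(H_k(2a)+H_{k-1}(2a))-(H_k(-2a)+H_{k-1}(-2a))=2\big((-1)^kH_{k-1}(2a)-H_{k-1}(-2a)\big)$; only this difference isolates the single factor $\frac{1}{2k-1}$ at the junction index. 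Finally, your guessed recurrence $(n+1-z)S(n+1)+(n+1+z)S(n)=\cdots$ and integrating factor $(-1)^n\frac{(n+z)!}{(n-z)!}$ are those of the first theorem; with the symmetric pole structure the recurrence here is $\bigl(1-\frac{z^2}{(n+1)^2}\bigr)S(n+1)+S(n)=-G(n,1)$ with integrating factor $(-1)^n\prod_{j=1}^n\bigl(1-\frac{z^2}{j^2}\bigr)$, which is what makes the squared factors $\prod(1-z^2/j^2)^{-1}$ and $\prod(1-w^2/j^2)^{-1}$ (hence the even exponents in $R_n(a,b)$) appear. As written, your plan would need all three of these points repaired before it could produce \eqref{c3}.
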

\begin{proof}

\noindent For $n,j\geq 1$, let
\begin{equation*}
f(z):=\frac{z!(z-n-1)!}{(z+n)!(z-1)!}\cdot \frac{1}{z^{r+1}}\cdot \bigg[\frac1{j^d}-\frac1{(z+j)^d}\bigg].
\end{equation*}
Then its partial fraction decomposition is
\begin{align*}
f(z)&=\sum_{k=1}^n\frac{(-1)^{n-k}}{(n+k)!(n-k)!}\frac{1}{k^r}
\bigg[\frac1{j^d}-\frac1{(k+j)^d}\bigg]\frac{1}{z-k}\\
&+(-1)^r\sum_{k=1}^n\frac{(-1)^{n-k}}{(n+k)!(n-k)!}\frac{1}{k^r}\bigg[\frac1{j^d}\bigg]\frac{1}{z+k}\\
&+(-1)^r\sum_{k=1,k\not=j}^n\frac{(-1)^{n-k}}{(n+k)!(n-k)!}
\frac{1}{k^r}\bigg[-\frac{1}{(-k+j)^d}\bigg]\frac{1}{z+k}\\
&+\frac{C_r(n,j)}{z^r}+\cdots+\frac{C_1(n,j)}{z}
+\frac{D_{d+1}(n,j)}{(z+j)^{d+1}}+\cdots+\frac{D_1(n,j)}{z+j}.
\end{align*}
Multiply by $z$ and let $z\to\infty$. 
\begin{align*}
0&=\sum_{k=1}^n\frac{(-1)^{n-k}}{(n+k)!(n-k)!}\frac{1}{k^{r}}
\bigg[\frac1{j^d}-\frac1{(k+j)^d}\bigg]\\
&+(-1)^r\sum_{k=1}^n\frac{(-1)^{n-k}}{(n+k)!(n-k)!}
\frac{1}{k^{r}}\bigg[\frac{1}{j^d}\bigg]\\
&+(-1)^r\sum_{k=1,k\not=j}^n\frac{(-1)^{n-k}}{(n+k)!(n-k)!}
\frac{1}{k^{r}}\bigg[-\frac{1}{(-k+j)^d}\bigg]\\
&+C_1(n,j)+D_1(n,j).
\end{align*}
Moreover
\begin{align*}
C_1(n,j)&=-[z^{r}][w^{d-1}]\frac{z!(z-n-1)!}{(z+n)!(z-1)!}\cdot \bigg[\frac1{w-j}-\frac1{w-(z+j)}\bigg]
\end{align*}
and
\begin{align*}
D_1(n,j)&=[z^{r}][w^{d-1}]
\frac{(w-j)!(w-j-n-1)!}{(w-j+n)!(w-j-1)!}\cdot \bigg[\frac1{w-j}-\frac1{w-(z+j)}\bigg].
\end{align*}
Define $S(n)$ via
$$[z^{r}][w^{d-1}]S(n)=(n!)^2\sum_{j\geq 1}(C_1(n,j)+D_1(n,j))$$
then by summing over $j\geq 1$, we obtain 
\begin{align*}
[z^{r}][w^{d-1}](-1)^{n-1}S(n)&=
\sum_{k=1}^n\frac{(-1)^{k}}{k^{r}}\binom{n}{k}
\binom{n+k}{k}^{-1}H_k(d)\\
&+(-1)^r\sum_{k=1}^n\frac{(-1)^{k}}{k^{r}}
\binom{n}{k}\binom{n+k}{k}^{-1}\zeta(d)\\
&-(-1)^r\sum_{j=1}^n\sum_{k=1}^{j-1}\frac{(-1)^{k}}{k^{r}}
\binom{n}{k}\binom{n+k}{k}^{-1}\frac{1}{(j-k)^d}\\
&-(-1)^r\sum_{j=1}^n\sum_{k=j+1}^{n}\frac{(-1)^{k}}{k^{r}}
\binom{n}{k}\binom{n+k}{k}^{-1}\frac{1}{(j-k)^d}\\
&-(-1)^r\sum_{j=n+1}^{\infty}\sum_{k=1}^{n}\frac{(-1)^{k}}{k^{r}}\binom{n}{k}\binom{n+k}{k}^{-1}\frac{1}{(j-k)^d}\\
&=
\sum_{k=1}^n\frac{(-1)^{k}}{k^{r}}\binom{n}{k}\binom{n+k}{k}^{-1}
\Big(H_k(d)-(-1)^{r+d}H_{k-1}(d)\Big).
\end{align*}
Let
$$F(n,j)=(n!)^2\left(-\frac{z!(z-n-1)!}{(z+n)!(z-1)!}
+\frac{(w-j)!(w-j-n-1)!}{(w-j+n)!(w-j-1)!}\right)
\bigg[\frac{1}{w-j}-\frac{1}{w-(z+j)}\bigg]
$$
and
$$G(n,j)=(n!)^2\frac{(w-j-n-1)!}{(w-j+n+1)!}
\left(\frac{z+n+1}{2(n+1)}-(w-j+z+1)\right)\frac{z}{2n+1}.$$
Then $[z^{r}][w^{d-1}]F(n,j)=(n!)^2(C_1(n,j)+D_1(n,j))$,
$$\left(1-\frac{z^2}{(n+1)^2}\right) F(n+1,j)+F(n,j)=G(n,j+1)-G(n,j)$$
and by summing over $j\geq 1$, we get 
\begin{align*}\left(1-\frac{z^2}{(n+1)^2}\right) S(n+1)+S(n)&=\lim_{j\to\infty}G(n,j+1)-G(n,1)=-G(n,1).
\end{align*}
Let 
$$T(n)=(-1)^n\prod_{j=1}^n \left(1-\frac{z^2}{j^2}\right)\cdot S(n)$$
then, 
$$T(n+1)-T(n)=
\prod_{j=1}^n \left(1-\frac{z^2}{j^2}\right)\cdot(-1)^{n}G(n,1).$$
Since
\begin{align*}
G(n,1)&=(n!)^2\frac{(w-n-2)!}{(w+n)!}
\left(\frac{z+n+1}{2(n+1)}-(w+z)\right)\frac{z}{2n+1}\\
&=(-1)^n\prod_{j=1}^n \left(1-\frac{w^2}{j^2}\right)^{-1}\cdot
\left(\frac{1}{2}\left(1+\frac{z}{n+1}\right)-(w+z)\right)\frac{z}{(2n+1)w(w-n-1)},
\end{align*}
we have that
\begin{align*}
(-1)^{n-1}S(n)&=-\prod_{j=1}^n \left(1-\frac{z^2}{j^2}\right)^{-1}\cdot T(n)\\
&=-\prod_{j=1}^n \left(1-\frac{z^2}{j^2}\right)^{-1}\cdot
\sum_{k=1}^n(T(k)-T(k-1))\\
&=-\prod_{j=1}^n \left(1-\frac{z^2}{j^2}\right)^{-1}\cdot
\sum_{k=1}^n\prod_{j=1}^{k-1} \left(1-\frac{z^2}{j^2}\right)(-1)^{k-1}G(k-1,1)\\
&=
\sum_{k=1}^n\prod_{j=k}^{n} \left(1-\frac{z^2}{j^2}\right)^{-1}
\cdot\prod_{j=1}^{k-1}\left(1-\frac{w^2}{j^2}\right)^{-1}\\
&\qquad\cdot
\left(\frac{1}{2}\left(1+\frac{z}{k}\right)-(w+z)\right)
\frac{z(1+w/k)}{k(2k-1)w(1-w^2/k^2)}\\
&=\frac{z}{w}
\sum_{k=1}^n\prod_{j=k}^{n} \left(1-\frac{z^2}{j^2}\right)^{-1}\cdot
\prod_{j=1}^{k}\left(1-\frac{w^2}{j^2}\right)^{-1}\\
&\qquad\cdot
\left(\frac{1}{2(2k-1)k}-\frac{w}{2k^2}-\frac{z}{2k^2}-\frac{w^2}{(2k-1)k^2}-\frac{zw}{2k^3}\right).
\end{align*}
Hence
\begin{align}\label{gf2}
\sum_{k=1}^n\frac{(-1)^{k}}{k^{r}}
&\binom{n}{k}\binom{n+k}{k}^{-1}
(H_k(d)-(-1)^{r+d}H_{k-1}(d))\nonumber\\
&=[z^{r-1}][w^{d}]
\sum_{k=1}^n\prod_{j=k}^{n} \left(1-\frac{z^2}{j^2}\right)^{-1}\cdot
\prod_{j=1}^{k}\left(1-\frac{w^2}{j^2}\right)^{-1}\nonumber\\
&\qquad\cdot
\left(\frac{1}{2(2k-1)k}-\frac{w}{2k^2}-\frac{z}{2k^2}-\frac{w^2}{(2k-1)k^2}-\frac{zw}{2k^3}\right).
\end{align}
If $r=2b-1$  and $d=2a$ then we obtain
\begin{align*}
\sum_{k=1}^n\frac{(-1)^{k}}{k^{2b-1}}
&\binom{n}{k}\binom{n+k}{k}^{-1}
(H_k(2a)+H_{k-1}(2a))\\
&=[z^{2(b-1)}][w^{2a}]
\sum_{k=1}^n\prod_{j=k}^{n} \left(1-\frac{z^2}{j^2}\right)^{-1}\cdot
\prod_{j=1}^{k}\left(1-\frac{w^2}{j^2}\right)^{-1}\\
&\qquad\cdot
\left(-\frac{1}{2k}+\frac{1}{2k-1}\left(1-\frac{w^2}{k^2}\right)\right).
\end{align*}
Since
\begin{align*}
\sum_{k=1}^n\frac{1}{k^{2b-1}}&\binom{n}{k}\binom{n+k}{k}^{-1}
(H_k(-2a)+H_{k-1}(-2a))\\
&=[z^{2(b-1)}][w^{2a}]
\sum_{k=1}^n\prod_{j=k}^{n} \left(1-\frac{z^2}{j^2}\right)^{-1}\cdot
\prod_{j=1}^{k}\left(1-\frac{w^2}{j^2}\right)^{-1}\cdot\left(-\frac{1}{2k}\right)\\
\end{align*}
and
$$(-1)^{k}(H_k(2a)+H_{k-1}(2a))-(H_k(-2a)+H_{k-1}(-2a))=2((-1)^k H_{k-1}(2a)-H_{k-1}(-2a))$$
it follows that
\begin{align*}
2\sum_{k=1}^n
\frac{1}{k^{2b-1}}&\binom{n}{k}\binom{n+k}{k}^{-1}
((-1)^k H_{k-1}(2a)-H_{k-1}(-2a))\\
&=[z^{2(b-1)}][w^{2a}]
\sum_{k=1}^n\prod_{j=k}^{n} \left(1-\frac{z^2}{j^2}\right)^{-1}\cdot
\prod_{j=1}^{k-1}\left(1-\frac{w^2}{j^2}\right)^{-1}\cdot\left(\frac{1}{2k-1}\right)\\
&=\sum_{k=1}^n
\frac{S_{1,k-1}(\{2\}^{a})S_{k,n}(\{2\}^{b-1})}{(2k-1)}=R_n(a,b).
\end{align*}
\end{proof}

Note that if $r=2b-1$ and $d=1$ then \eqref{gf2} yields \eqref{i2},
\begin{align*}
\sum_{k=1}^n\frac{(-1)^{k}}{k^{2b-1}}
&\binom{n}{k}\binom{n+k}{k}^{-1}
\left(\frac{1}{k}\right)\\
&=[z^{2(b-1)}][w^{1}]
\sum_{k=1}^n\prod_{j=k}^{n} \left(1-\frac{z^2}{j^2}\right)^{-1}\cdot
\prod_{j=1}^{k}\left(1-\frac{w^2}{j^2}\right)^{-1}\cdot
\left(-\frac{w}{2k^2}\right)\\
&=-\frac{1}{2}S_n(\{2\}^{b}).
\end{align*}
Moreover, if $r=2b$ and $d=2a+1$ then \eqref{gf2} yields \eqref{i3},
\begin{align*}
\sum_{k=1}^n\frac{(-1)^{k}}{k^{2b}}
&\binom{n}{k}\binom{n+k}{k}^{-1}
(H_k(2a+1)+H_{k-1}(2a+1))\\
&=[z^{2b-1}][w^{2a+1}]
\sum_{k=1}^n\prod_{j=k}^{n} \left(1-\frac{z^2}{j^2}\right)^{-1}\cdot
\prod_{j=1}^{k}\left(1-\frac{w^2}{j^2}\right)^{-1}\cdot
\left(-\frac{zw}{2k^3}\right)\\
&=-\frac{1}{2}S_n(\{2\}^{a},3,\{2\}^{b-1}).
\end{align*}

\section{Two applications of identity \eqref{c3}}

Our first application involves infinite series.
Evaluations for Euler sums of length two and of odd weight $m+n$,
$H_{\infty}(m,-n)$ and $H_{\infty}(-m,n)$
in terms of zeta values are well known 
(see, for example, \cite[Th. 7.2]{FS:98}):
\begin{align}
2H_{\infty}(m,-n)&=
\overline{\zeta}(m+n)-
(1-(-1)^m)\zeta(m)\overline{\zeta}(n)\nonumber \\
&\quad +2(-1)^m
\sum_{r=1}^{(m+n-1)/2}
\binom{2r}{n-1}\overline{\zeta}(2r+1)\overline{\zeta}(m+n-2r-1)
\nonumber\\
&\quad -2(-1)^m\sum_{r=1}^{(m+n-1)/2}\binom{2r}{m-1}\zeta(2r+1)\overline{\zeta}(m+n-2r-1) \label{i1},\\
2H_{\infty}(-n,m)&=-2H_{\infty}(m,-n)-2\zeta(m)\overline{\zeta}(n)+2\overline{\zeta}(m+n) \label{i2},
\end{align}
where $\overline{\zeta}(0)=1/2$, $\overline{\zeta}(1)=\ln(2)$, 
$\overline{\zeta}(n)=(1-2^{1-n})\zeta(n)$, and $\zeta(n)=\sum_{k=1}^{\infty}1/k^n$, for $n>1$.

\begin{theorem}  
Let $a,b$ positive integers with $b>1$. Then
\begin{align*}
R_{\infty}(a,b)=4\sum_{r=1}^{a+b-1}
\left(\binom{2r}{2b-2}-\binom{2r}{2a-1}\right)
\left(1-\frac{1}{2^{2r+1}}\right)
\zeta(2r+1)\overline{\zeta}(2(a+b-1-r))
\end{align*}
\end{theorem}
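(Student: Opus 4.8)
The plan is to let $n\to\infty$ in \eqref{c3} and then identify the resulting series with length-two alternating Euler sums, whose closed forms are recalled at the start of this section. First I would note that for each fixed $k$ the balanced binomial weight is
$$\binom{n}{k}\binom{n+k}{k}^{-1}=\frac{(n!)^2}{(n-k)!\,(n+k)!}=\prod_{i=1}^{k}\frac{n-i+1}{n+i}\ \longrightarrow\ 1\qquad(n\to\infty),$$
and that this factor lies in $[0,1]$. Since $|H_{k-1}(\pm 2a)|\le\zeta(2a)$ and the hypothesis $b>1$ gives $2b-1\ge 3$, the general term of the series on the left of \eqref{c3} is bounded, uniformly in $n$, by a constant multiple of $k^{-(2b-1)}$, which is summable; hence Tannery's theorem justifies taking the limit inside the sum, giving
$$R_{\infty}(a,b)=2\sum_{k=1}^{\infty}\frac{(-1)^{k}H_{k-1}(2a)-H_{k-1}(-2a)}{k^{2b-1}}.$$

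Writing out $H_{k-1}(\pm 2a)$ and reversing the order of summation over the pair $1\le j<k$, I would recognise the two series as the alternating multiple harmonic sums of the Introduction, namely
$$R_{\infty}(a,b)=2H_{\infty}(2a,-(2b-1))-2H_{\infty}(-2a,2b-1).$$
To evaluate the first term I would apply the closed form for $2H_{\infty}(m,-n)$ with $m=2a$ and $n=2b-1$; here $m$ is even, so the term carrying $1-(-1)^m$ vanishes, and with $w:=a+b-1$ one has $m+n=2w+1$, upper index $(m+n-1)/2=w$, and $m+n-2r-1=2(w-r)$. For the second term I would first use the reflection identity to write $H_{\infty}(-2a,2b-1)=-H_{\infty}(2b-1,-2a)-\zeta(2b-1)\overline{\zeta}(2a)+\overline{\zeta}(2w+1)$, and then evaluate $H_{\infty}(2b-1,-2a)$ by the same closed form, now with the odd value $m=2b-1$ and $n=2a$, so that the two binomial coefficients $\binom{2r}{m-1}$ and $\binom{2r}{n-1}$ exchange roles relative to the first term.

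The decisive step is the subtraction. The leading $\overline{\zeta}(2w+1)$ contributions cancel, as do the cross terms $\zeta(2b-1)\overline{\zeta}(2a)$; and because the binomial indices are interchanged between the two Euler sums, the surviving $\overline{\zeta}(2r+1)$ part and the surviving $\zeta(2r+1)$ part both acquire the common factor $\binom{2r}{2b-2}-\binom{2r}{2a-1}$. Their sum therefore produces $\overline{\zeta}(2r+1)+\zeta(2r+1)$, and the claim follows from the elementary simplification
$$2\bigl(\overline{\zeta}(2r+1)+\zeta(2r+1)\bigr)=2\bigl(2-2^{-2r}\bigr)\zeta(2r+1)=4\Bigl(1-\tfrac{1}{2^{2r+1}}\Bigr)\zeta(2r+1),$$
while the remaining argument $\overline{\zeta}(2(w-r))$ (with $\overline{\zeta}(0)=1/2$ when $r=w$) is left untouched. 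I expect the only real difficulty to be the interchange of limit and summation in the first step; the rest is a careful but mechanical tracking of signs and coefficients, which I would verify against the stated value $R_{\infty}(1,2)=\tfrac{31}{8}\zeta(5)-\tfrac{7}{4}\zeta(3)\zeta(2)$ (the case $w=2$, whose $r=1$ and $r=2$ terms reproduce the two summands exactly).
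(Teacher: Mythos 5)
Your proposal is correct and follows exactly the route of the paper's (very terse) proof: pass to the limit $n\to\infty$ in \eqref{c3} to get $R_{\infty}(a,b)=2H_{\infty}(2a,-(2b-1))-2H_{\infty}(-2a,2b-1)$, then evaluate via the Flajolet--Salvy formulas \eqref{i1} and \eqref{i2}. Your added justification of the limit interchange and the explicit tracking of the cancellations (which I checked reproduce the stated coefficient $4(1-2^{-(2r+1)})$) simply fill in details the paper leaves to the reader.
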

\begin{proof} By taking the limit $n\to \infty$ in \eqref{c3},
$$R_{\infty}(a,b)=2H_{\infty}(2a,-(2b-1))-2H_{\infty}(-2a,2b-1).$$
Then use \eqref{i1} and \eqref{i2}.
\end{proof}

\noindent For the second application of \eqref{c3}, we first recall some congruences.
Let $m,n$ be positive integers and let $p$ be any prime $p>w+1$ where
$w=m+n$,

\noindent i) if $w$ is even then (\cite[Th. 3.2]{Zh:08})
$$
H_{p-1}(m,n)\equiv \left(
(-1)^{m}n\binom{w+1}{m}-(-1)^{m}m\binom{w+1}{n}-w\right) 
\frac{pB_{p-w-1}}{2(w+1)} \pmod{p^2},
$$

\noindent ii) if $m$ is even and $n$ is even then 
(\cite[Lemma 3.1]{HHT:13})
$$
H_{p-1}(-m,-n)\equiv\left(\frac{(m-n)(1-2^{-w})}{2(m+1)(n+1)}
\binom{w}{m}-\frac{w}{2(w+1)}\right)pB_{p-w-1} \pmod{p^2}.
$$

\noindent iii) if $w$ is odd then (\cite[Lemma 1]{HHT:11})
$$
H_{\frac{p-1}{2}}(m,n)\equiv \left((-1)^n
\binom{w}{m}+2^{w}-2\right)\frac{B_{p-w}}{2w} \pmod{p},
$$

\noindent iv) if $m$ is even and $n$ is odd then 
(\cite[Lemma 3.2]{HHT:13})
$$
H_{\frac{p-1}{2}}(-m,-n)\equiv 
\left(\frac{1}{2^{w}}\binom{w}{m}+1\right)\frac{(2^{w-1}-1)B_{p-w}}{w}\pmod{p}.
$$

\begin{lemma}  
Let $m,n,r$ be positive integers. If $n+r$ is odd and $m$ is even then for any prime $p>\max(m,n)+1$,
\begin{align*}
\sum_{k=1}^{p-1}
\frac{(H_{k-1}(m)-(-1)^kH_{k-1}(-m))(H_{k}(n)+H_{k-1}(n))}{k^{r}}\equiv 0 \pmod{p}.
\end{align*}
\end{lemma}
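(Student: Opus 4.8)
The plan is to exploit the reflection $k\mapsto p-k$, which permutes the index set $\{1,\dots,p-1\}$. Abbreviate $a_k:=H_{k-1}(m)-(-1)^kH_{k-1}(-m)$ and $b_k:=H_k(n)+H_{k-1}(n)$, so the target is $\Sigma:=\sum_{k=1}^{p-1}a_kb_k/k^r$. Since $p>\max(m,n)+1$ we have $p-1\nmid m$ and $p-1\nmid n$, whence the Wolstenholme-type vanishing $H_{p-1}(m)\equiv H_{p-1}(n)\equiv 0\pmod p$. Replacing $j$ by $p-j$ in a tail then gives, for $1\le s\le p-1$, the reflection congruences
\[
H_{p-1-s}(a)\equiv -(-1)^aH_s(a),\qquad H_{p-1-s}(-a)\equiv H_{p-1}(-a)+(-1)^aH_s(-a)\pmod p,
\]
together with $k^{-r}\equiv(-1)^r(p-k)^{-r}$ and $(-1)^{p-k}=-(-1)^k$.

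Applying these with $a=n$ yields the clean rule $b_{p-k}\equiv-(-1)^nb_k\pmod p$, the global term dropping out because $H_{p-1}(n)\equiv 0$. The factor $a_k$ is the delicate one: since $m$ is even, expanding $H_k=H_{k-1}+k^{-m}$ gives $a_{p-k}\equiv -a_k+(-1)^kH_{p-1}(-m)\pmod p$. The extra summand $(-1)^kH_{p-1}(-m)$ is exactly the obstacle to a one-line symmetry argument, since $H_{p-1}(-m)$ need not vanish modulo $p$.

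To neutralise it I would introduce the auxiliary sum $\Sigma':=\sum_{k=1}^{p-1}(-1)^kb_k/k^r$ and show it vanishes by its own antisymmetry: reflecting and using $(-1)^{p-k}=-(-1)^k$, $(p-k)^{-r}\equiv(-1)^rk^{-r}$ and $b_{p-k}\equiv-(-1)^nb_k$ gives $\Sigma'\equiv(-1)^{n+r}\Sigma'$, and as $n+r$ is odd this forces $2\Sigma'\equiv0$, i.e. $\Sigma'\equiv0\pmod p$. Finally, reflecting $\Sigma$ itself and inserting the two transformation rules,
\[
\Sigma\equiv(-1)^r\sum_{k=1}^{p-1}\frac{a_{p-k}b_{p-k}}{k^r}\equiv(-1)^{n+r}\Sigma-(-1)^{n+r}H_{p-1}(-m)\,\Sigma'\pmod p.
\]
Because $n+r$ is odd this reduces to $\Sigma\equiv-\Sigma+H_{p-1}(-m)\Sigma'$, hence $2\Sigma\equiv H_{p-1}(-m)\Sigma'\equiv0$; since $p$ is odd, $\Sigma\equiv0\pmod p$, as claimed. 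The only real work is verifying the two reflection rules for $a_k$ and $b_k$—routine given the parity of $m$—while the conceptual crux is recognising that the leftover $H_{p-1}(-m)$ is killed not by itself vanishing but by the independent vanishing of $\Sigma'$.
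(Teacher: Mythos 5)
Your argument is correct and runs on the same engine as the paper's: reflect $k\mapsto p-k$, use $H_{p-1}(m)\equiv H_{p-1}(n)\equiv 0$ to get the transformation rules for the two factors, and let the parity of $n+r$ force $2L\equiv 0$. The one genuine divergence is your treatment of the alternating piece. You assert that $H_{p-1}(-m)$ ``need not vanish modulo $p$'' and therefore manufacture the auxiliary sum $\Sigma'$ to absorb the leftover term $H_{p-1}(-m)\,\Sigma'$. In fact, for $m$ even and $p>m+1$ one has $H_{p-1}(-m)=2^{1-m}H_{(p-1)/2}(m)-H_{p-1}(m)\equiv 0\pmod p$ (both summands vanish by the usual Wolstenholme/reflection argument), and this is precisely the opening observation of the paper's proof: with it, $a_{p-k}\equiv -a_k$ cleanly and the lemma follows in one reflection step with no auxiliary sum. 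So your detour is unnecessary, but it is not wrong: $\Sigma'\equiv 0$ does follow from the antisymmetry you describe, and multiplying an unknown quantity by something $\equiv 0$ kills it either way; your route even has the mild virtue of not needing the alternating Wolstenholme fact. Two cosmetic slips worth fixing: the reflection rule for the alternating sum should read $H_{p-1-s}(-a)\equiv (-1)^a H_s(-a)-(-1)^aH_{p-1}(-a)\pmod p$ (your sign on the $H_{p-1}(-a)$ term is off), and consequently $a_{p-k}\equiv -a_k-(-1)^kH_{p-1}(-m)$ rather than $-a_k+(-1)^kH_{p-1}(-m)$; neither sign affects your conclusion since the offending term is annihilated by $\Sigma'\equiv 0$ regardless.
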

\begin{proof} Let $a$ be a positive integer or a negative even integer then, by Wolstenholme's theorem, for $p>a+1$,
$$H_{p-1}(a)\equiv 0 \pmod{p}.$$ 
Thus
\begin{align*}
H_{p-k}(a)&=\sum_{j=1}^{p-k}\frac{({\rm sgn}(a))^j}{j^a}=
\sum_{j=k}^{p-1}\frac{({\rm sgn}(a))^{p-j}}{(p-j)^a}\\
&\equiv (-1)^a{\rm sgn}(a)\sum_{j=k}^{p-1}\frac{({\rm sgn}(a))^{j}}{j^a}\\
&\equiv (-1)^a{\rm sgn}(a)(H_{p-1}(a)-H_{k-1}(a))\\
&\equiv -(-1)^a{\rm sgn}(a)H_{k-1}(a) \pmod{p}.
\end{align*}
Hence
\begin{align*}
L&:=\sum_{k=1}^{p-1}
\frac{(H_{k-1}(m)-(-1)^kH_{k-1}(-m))(H_{k}(n)+H_{k-1}(n))}{k^{r}}\\
&=\sum_{k=1}^{p-1}
\frac{(H_{p-k-1}(m)-(-1)^{p-k}H_{p-k-1}(-m))(H_{p-k}(n)+H_{p-k-1}(n))}{(p-k)^{r}}\\
&\equiv\sum_{k=1}^{p-1}
\frac{(-H_{k}(m)+(-1)^{k}H_{k}(-m))(-1)^n(-H_{k-1}(n)-H_{k}(n))}
{(-1)^r k^{r}}\\
&\equiv(-1)^{n+r}L+(-1)^{n+r}\sum_{k=1}^{p-1}
\frac{(1-(-1)^{2k})(H_{k-1}(n)-H_{k}(n))}{k^{r+m}}\\
&\equiv -L\pmod{p},
\end{align*}
which implies that $L\equiv 0 \pmod{p}$.
\end{proof} 
\begin{theorem}  
For any prime $p>2a+2b-1$,
\begin{align*}
R_{(p-1)/2}(a,b)
&\equiv -\frac{2(1-2^{-(2a+2b-1)})}{2a+2b-1}\binom{2a+2b-1}{2a}B_{p-(2a+2b-1)} \pmod{p},\\
R_{p-1}(a,b)
&\equiv \frac{2(a-b+1)R_{(p-1)/2}(a,b)}{(2a+1)} \pmod{p}.
\end{align*}
\end{theorem}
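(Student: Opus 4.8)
The plan is to specialize identity~\eqref{c3} to the two values $n=(p-1)/2$ and $n=p-1$, reduce each resulting sum to ordinary multiple harmonic sums of argument $(p-1)/2$ or $p-1$, and then evaluate those by means of the congruences (i)--(iv) and the preceding Lemma. Throughout I write $W=2a+2b-1$ for the weight; the guiding observation is that both cases are ultimately governed by the single Bernoulli number $B_{p-W}$, because the congruences that enter the $n=p-1$ case carry weight $w=W-1$ and hence the same $B_{p-w-1}=B_{p-W}$.

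First the case $n=(p-1)/2$. I would start from the closed form $\binom{n}{k}\binom{n+k}{k}^{-1}=\dfrac{(n!)^2}{(n-k)!(n+k)!}$. For $n=(p-1)/2$ neither factorial in the denominator reaches $p$, so this is a $p$-adic unit, and since $n\equiv -1/2\pmod p$ the product $\prod_{i=1}^{k}\frac{n-i+1}{n+i}$ collapses modulo $p$ to $(-1)^k$. Substituting $\binom{n}{k}\binom{n+k}{k}^{-1}\equiv(-1)^k$ into \eqref{c3} turns the binomial weights into signs and gives
\begin{align*}
R_{(p-1)/2}(a,b)\equiv 2\big(H_{(p-1)/2}(2a,2b-1)-H_{(p-1)/2}(-2a,-(2b-1))\big)\pmod p.
\end{align*}
Since $2b-1$ is odd the weight $W$ is odd, so I would apply (iii) to $H_{(p-1)/2}(2a,2b-1)$ and (iv) to $H_{(p-1)/2}(-2a,-(2b-1))$ (the latter has even first entry and odd second entry, exactly as (iv) requires). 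Both outputs carry $B_{p-W}$, and after collecting the terms in $\binom{W}{2a}$ and in $2^{-W}$ the constant and the power-of-$2$ pieces combine to the claimed value $-\frac{2(1-2^{-W})}{W}\binom{W}{2a}B_{p-W}$; this is the only genuine computation here and it is short.

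Now the case $n=p-1$. The same closed form now has exactly one factor of $p$ in the denominator, so $\binom{p-1}{k}\binom{p-1+k}{k}^{-1}$ has a simple pole at $p$; expanding each factorial to second order yields
\begin{align*}
\binom{p-1}{k}\binom{p-1+k}{k}^{-1}\equiv \frac{(-1)^k k}{p}\big(1-p(H_k(1)+H_{k-1}(1))\big)\pmod p,
\end{align*}
where the bracket must be tracked modulo $p^2$ on account of the pole. Feeding this into \eqref{c3} splits $R_{p-1}(a,b)$ into two pieces. The polar piece is $\frac{2}{p}\big(H_{p-1}(2a,2b-2)-H_{p-1}(-2a,-(2b-2))\big)$, of even weight $W-1$, and by (i) and (ii) each of these sums is divisible by $p$ with numerator a multiple of $B_{p-(W-1)-1}=B_{p-W}$; dividing by $p$ leaves a finite residue I can read off modulo $p$. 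The finite piece is precisely $-2\sum_{k=1}^{p-1}k^{-(2b-2)}(H_k(1)+H_{k-1}(1))\big(H_{k-1}(2a)-(-1)^kH_{k-1}(-2a)\big)$, which is the sum treated in the preceding Lemma with $2a,1,2b-2$ in the roles of $m,n,r$: the parity hypotheses ($n+r=2b-1$ odd, $m$ even) hold, so this piece vanishes modulo $p$. It then remains to simplify the polar piece: applying (i) and (ii) and using $\binom{W}{2b-2}=\binom{W}{2a+1}=\frac{2b-1}{2a+1}\binom{W}{2a}$ together with $\binom{W-1}{2a}=\frac{2b-1}{W}\binom{W}{2a}$, the coefficients of the ``$1$'' and of the ``$2^{-W}$'' parts each reduce to $\frac{2(a-b+1)}{2a+1}$ times the corresponding coefficient from the first case, giving $R_{p-1}(a,b)\equiv\frac{2(a-b+1)}{2a+1}R_{(p-1)/2}(a,b)\pmod p$ without ever writing the common closed form twice.

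I expect the main obstacle to be this second case. One must expand the binomial coefficient to order $p^2$ so that the simple pole produces the right finite residue, identify the finite piece exactly so that it matches the Lemma's hypotheses, and then carry the two applications of the $p^2$-congruences (i) and (ii) through the binomial bookkeeping; the factorization $\binom{W}{2b-2}=\frac{2b-1}{2a+1}\binom{W}{2a}$ is what makes both the rational and the power-of-$2$ contributions collapse to the single clean ratio $\frac{2(a-b+1)}{2a+1}$. The degenerate value $b=1$ (where $2b-2=0$ and (i)--(ii) do not directly apply) I would dispose of by evaluating the two elementary sums in the polar piece directly.
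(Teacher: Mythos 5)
Your proposal is correct and follows essentially the same route as the paper: reduce $\binom{n}{k}\binom{n+k}{k}^{-1}$ modulo $p$ (respectively, modulo its simple pole at $p$) for $n=(p-1)/2$ and $n=p-1$, substitute into \eqref{c3}, kill the non-polar piece with the Lemma, and finish with the congruences i)--iv). Your bookkeeping is in fact the internally consistent version of the paper's: keeping the factor $k$ in $\frac{(-1)^k k}{p}\bigl(1-p(H_k(1)+H_{k-1}(1))\bigr)$ makes the finite piece carry $k^{-(2b-2)}$, so the Lemma's parity hypothesis $n+r=2b-1$ odd genuinely holds, and you correctly invoke iii)--iv) for the half-range case and i)--ii) for the full range (the paper's citations are interchanged), while also flagging the degenerate case $b=1$ that the paper leaves implicit.
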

\begin{proof} 
Setting $n=\frac{p-1}{2}$, we note that
$$
\binom{n}{k}\binom{n+k}{k}^{-1}=(-1)^k
\frac{\bigl(\frac{1}{2}-\frac{p}{2}\bigr)\bigl(\frac{3}{2}-\frac{p}{2}\bigr)\cdots\bigl(\frac{2k-1}{2}-\frac{p}{2}\bigr)}%
{\bigl(\frac{1}{2}+\frac{p}{2}\bigr)\bigl(\frac{3}{2}+\frac{p}{2}\bigr)\cdots\bigl(\frac{2k-1}{2}+\frac{p}{2}\bigr)}\equiv (-1)^k \pmod{p}.
$$
Therefore, by i) and ii),
\begin{align*}
R_{(p-1)/2}(a,b)&\equiv 2\sum_{k=1}^n
\frac{1}{k^{2b-1}}(H_{k-1}(2a)-(-1)^kH_{k-1}(-2a))\\
&\equiv 2H_{(p-1)/2}(2a,2b-1)-2H_{(p-1)/2}(-2a,-(2b-1))\\
&\equiv -\frac{2(1-2^{-(2a+2b-1)})}{2a+2b-1}\binom{2a+2b-1}{2a}B_{p-(2a+2b-1)} \pmod{p}.
\end{align*}
Now, setting $n= p-1$ we have that
\begin{align*}
\binom{n}{k}\binom{n+k}{k}^{-1}&
=\frac{(-1)^k k}{p}\prod_{j=1}^k\left(1-\frac{p}{j}\right)\cdot\prod_{j=1}^{k-1}\left(1+\frac{p}{j}\right)^{-1} \\
&\equiv \frac{(-1)^k k}{p}(1-pH_k(1))(1-pH_{k-1}(1)) \\
&\equiv (-1)^k\left(\frac{k}{p}-(H_{k}(1)+H_{k-1}(1))\right)\pmod{p},
\end{align*}
and by the previous lemma, by iii) and iv), it follows that
\begin{align*}
R_{(p-1)/2}(a,b)&\equiv 
2\sum_{k=1}^n
\frac{1}{k^{2b-1}}\left(\frac{k}{p}-(H_{k}(1)+H_{k-1}(1))\right)
(H_{k-1}(2a)-(-1)^kH_{k-1}(-2a))\\
&\equiv 
\frac{2}{p}\sum_{k=1}^n
\frac{H_{k-1}(2a)-(-1)^kH_{k-1}(-2a)}{k^{2b-2}}\\
&\equiv 2\left(\frac{H_{p-1}(2a,2b-2)}{p}-\frac{H_{p-1}(-2a,-(2b-2))}{p}\right)\\
&\equiv \frac{2(a-b+1)R_{(p-1)/2}(a,b)}{(2a+1)} \pmod{p}.
\end{align*}
\end{proof}


\begin{thebibliography}{99}

\bibitem{FS:98} 
P.~Flajolet, B.~Salvy, 
{\it Euler sums and contour integral representations,} Experiment.~Math. {\bf 7} (1998), no.~1, 15--35.


\bibitem{FlSe95} 
P.~Flajolet, R.~Sedgewick, 
{\it Mellin transforms and asymptotics: Finite differences and {R}ice's integrals,} Theoret. Comput. Sci. {\bf 144} (1995), 101--124.

\bibitem{HHT:11}
Kh.~Hessami Pilehrood, T.~Hessami Pilehrood, R.~Tauraso,
{\it Congruences concerning Jacobi polynomials and Ap\'ery-like formulae},
Int.~J.~Number Theory {\bf 8} (2012), no.~7, 1789--1811.

\bibitem{HHT:13} 
Kh.~Hessami Pilehrood, T.~Hessami Pilehrood, R.~Tauraso,
{\it New properties of multiple harmonic sums modulo $p$ and $p$-analogues of Leshchiner's series}, 
to be published in Trans. Amer. Math. Soc.

\bibitem{Pr:08}
H.~Prodinger, 
{\it Human Proofs of Identities by Osburn and Schneider},  INTEGERS, A10, {\bf 8} (2008).

\bibitem{Pr:10}
H.~Prodinger, 
{\it Identities involving harmonic numbers that are of interest for physicists}, Util. Math., {\bf 83} (2010), 291-299.

\bibitem{Ta:12}
R.~Tauraso, 
{\it Supercongruences for a truncated hypergeometric series}, INTEGERS, A45, {\bf 12} (2012).

\bibitem{Zh:08}
J.~Zhao,  {\it Wolstenholme type theorem for multiple harmonic sums,}
Int. J.~Number Theory, {\bf 4} (2008), no.~1, 73--106.
 
\end{thebibliography}
\end{document}